\theoremstyle{definition}%
\newtheorem{theorem}{Theorem}%  meant for continuous numbers
\newtheorem{observation}[theorem]{Observation}% 
\newtheorem{lemma}[theorem]{Lemma}% 
\newtheorem{corollary}[theorem]{Corollary}% 
\newtheorem{example}[theorem]{Example}%
\newtheorem{definition}[theorem]{Definition}%
\newcommand\NN{\ensuremath{\mathbb{N}}}
\newcommand\QQ{\ensuremath{\mathbb{Q}}}
\newcommand\ZZ{\ensuremath{\mathbb{Z}}}
\newcommand\RR{\ensuremath{\mathbb{R}}}
\newcommand\set[1]{\ensuremath{\{#1\}}}
\newcommand\Infty{\ensuremath{\omega_{0}}}
\newcommand\indexify[1]{\scalebox{0.5}{\ensuremath{#1}}}
\newcommand\V{V}
\newcommand\E{E}
\newcommand\G{G}
\newcommand\func{f}
\newcommand\f{f}
\newcommand\X{X}
\renewcommand\v{v}
\newcommand\va{w}
\newcommand\inv{^{-1}}
\newcommand\m{\mu}
\DeclareMathOperator\RDV{\textup{val}}
\DeclareMathOperator\RDN{\gamma_{\indexify{\textup R}}\negthinspace}
\DeclareMathOperator\nRDN{\bar\gamma_{\indexify{\textup R}}\negthinspace}
\DeclareMathOperator\RDM{\nRDN}
\newcommand\definedas{\coloneqq}
\newcommand\ep{\varepsilon}
\DeclareMathOperator\Neigh{\textup{N}}
\newcommand\intervalcc[1]{\ensuremath{\left[#1\right]}}
\newcommand\intervalco[1]{\ensuremath{\left[#1\right)}}
\newcommand\intervaloo[1]{\ensuremath{\left(#1\right)}}
\renewcommand\implies{\ensuremath{\Rightarrow}}
\newcommand\abs[1]{\ensuremath{\left\lvert#1\right\rvert}}
\renewcommand\emptyset{\varnothing}
\newcommand\powerset[1]{\ensuremath{\raisebox{.4pt}{\ensuremath{\mathcal{P}}}#1}}
\newcommand\D{\textup{d}}
\newcommand{\cupdot}{\mathbin{\mathaccent\cdot\cup}}
\newcommand\x{\relax}
\newcommand\y{\relax}
\newcommand\z{\relax}
\begin{document}

 \title{Roman Domination on Graphings}

 \author{Adrian Rettich (rettich.rptu@use.startmail.com)}
 \date{%
   University of Kaiserslautern-Landau, %
   Department of Mathematics\\%
   Paul-Ehrlich-Stra\ss{}e 14, 67663 Kaiserslautern\\%
   Rheinland-Pfalz, Germany\\[2ex]%
   preprint \today
 }

 \maketitle

 \abstract{\noindent We study a variant of domination, called Roman domination, where we can assign to each vertex a label~\(0\),~\(1\), or~\(2\) and require that every vertex with label~\(0\) has a neighbour with label~\(1\).
   We study the problem of finding a low-cost Roman dominating function on Lebesgue-measurable graphings, that is, on infinite graphs whose vertices are the points of a probability space. We provide a framework to tackle optimisation problems in the measurable combinatorial setting. In particular, we fully answer the Roman domination problem on irrational cycle graphs, a specific type of graphing on the space~\(\RR/\ZZ\) where an irrational number~\(\alpha\) is given and two vertices are adjacent if and only if their distance is~\(\alpha\).\\
 \textbf{Keywords: } Borel combinatorics, measurable combinatorics, graphing, Roman domination, Lebesgue measure
 }

 \setlength{\parindent}{0pt}

\section{Introduction}\label{sec1}

The matter of Borel solutions (or Baire-measurable, or more generally measurable solutions) has been well studied and revealed deep connections in recent years (see for example \cite{borelsurvey}).
The idea, in short, is that given an infinite graph which is embedded into some manner of topological space and a labelling problem%
\footnote{For example~\(k\)-colouring for some fixed integer~\(k\) or, indeed, Roman domination.}%
, we would like to know not only whether the graph admits a solution to the problem at all (for example, whether the graph can be~\(3\)-coloured), but whether it admits a ``nice'' solution (for example, whether it can be~\(3\)-coloured in such a way that each colour class is a Borel set).

One thing which many approaches to the topic have in common is that they consider only decision problems and have no sense of ``scale'', so to speak.
For example, domination problems are of no interest in the Borel context because the only reasonable question to ask is be ``can this graph be dominated by a Borel set'', and the answer is of course yes, since the set of all vertices is a dominating set.
It makes no sense to ask ``how many vertices do I need in order to dominate the graph'' since, at least for locally finite graphs, the answer will always be ``infinitely many''.

We introduce therefore a new approach in which we ask not \emph{whether} a problem an be solved in a measurable way, but optimise the \emph{volume} of vertices needed to solve the problem.
For example, given a graph embedded in a measure space, what is the lowest measure a set can have such that the vertices in that set dominate the entire graph?

We introduce first this new setting, then collect a few general observations and examples that might spark further research.
Finally, we solve the Roman domination problem on the irrational cycle graph.

\section{Preliminaries}

Throughout this paper, we are interested exclusively in the Lebesgue-measurable setting, not the standard Borel setting, and hence  assume the Axiom of Choice.

\begin{definition}
  The set of \emph{natural numbers} is~\(\NN \definedas \set{0, 1, 2, \dots}\).
  Its cardinality is~\(\Infty\).
  A set is called \emph{countable} if its cardinality is no larger than~\(\Infty\).
\end{definition}

\begin{definition}
  Given a set~\(X\), the power set of~\(X\) is denoted by~\(\powerset{X}\).
\end{definition}

\begin{definition}
  A \emph{graph} is a pair~\((\V,\E)\) of sets, where~\(\V\) is called the set of \emph{vertices} and~\(\E\subset\set{x\in\powerset\V\colon \abs{x} = 2}\) is called the set of \emph{edges}.

  In graph theorists' terms, graphs shall be undirected and simple and need not be finite.

  A graph is called \emph{finite} respectively \emph{countable} if its set of vertices is.
\end{definition}

Note that due to our graphs being simple, a finite amount of vertices implies a finite amount of edges, and a countable amount of vertices implies a countable amount of edges.

We also use some standard graph-theoretic notation:~\(\deg\v\) denotes the degree of a vertex, while~\(\Neigh\v\) denotes the set of its neighbours.
For a set of vertices~\(A\), we write~\(G[A]\) for the subgraph induced by~\(A\), that is, the largest subgraph of~\(G\) that contains no vertex from~\(G\setminus A\).
We write further~\(G - A \definedas G[G\setminus A]\).

\begin{definition}
  A graph is called \emph{locally finite} respectively \emph{locally countable} if every vertex has only finitely many respectively countably many neighbours.
\end{definition}

\begin{definition}
  Let~\(\G = (\V,\E)\) be a graph.
  A \emph{graph labelling} for~\(\G\) is a function~\(\func\colon \V\to\X\) into some set~\(\X\).
  If we want to specify the set of possible labels, we can also call this an~\(\X\)-labelling.
\end{definition}

\subsection{Roman Domination}

We introduce first the finite version of the main problem which we are to tackle in this paper, \emph{Roman domination}.

An introduction to Roman domination (and a historical background) can be found in \cite{cockayne}.

\begin{definition}
  Let~\(\G = (\V,\E)\) be a graph.
  A \emph{Roman dominating function} for~\(\G\) is a~\(\set{0,1,2}\)-labelling~\(\func\)
  such that
  \[\forall\v\in\V : \func\v = 0 \implies \exists\va\in\Neigh\v : \func\va = 2.\]
  We call~\(\RDV f \definedas \sum_{\v\in\V}\func\v\) the \emph{value} of the roman domination scheme~\(\func\).
  The \emph{Roman domination number}~\(\G\), written~\(\RDN\G\) is the smallest integer~\(i\) such that a roman domination scheme of value~\(i\) exists for~\(\G\), or~\(\infty\) if no scheme with finite value exists.
\end{definition}

Now, on a graph with infinitely many vertices, that number will mostly be infinite. We shall therefore introduce a slightly modified version of this problem in the next section.

\begin{theorem}
  The Roman domination number of a graph~\(\G\) is finite if and only if~\(\G\) admits a finite dominating set.
\end{theorem}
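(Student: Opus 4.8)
The plan is to prove both implications by direct construction, passing explicitly between Roman dominating functions of finite value and finite dominating sets; neither direction should require any machinery beyond unwinding the two definitions.

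For the implication that a finite dominating set yields a finite Roman domination number, I would begin with a finite dominating set~\(D\) and define the labelling~\(\func\) that assigns~\(2\) to every vertex in~\(D\) and~\(0\) to every vertex outside~\(D\). Its value is~\(\RDV\func = 2\abs D\), which is finite. It then remains only to verify that~\(\func\) is a genuine Roman dominating function: any vertex~\(\v\) with~\(\func\v = 0\) lies outside~\(D\), and since~\(D\) is dominating,~\(\v\) has a neighbour in~\(D\), which carries label~\(2\) by construction. Hence~\(\RDN\G \le 2\abs D < \infty\).

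For the converse, I would start from a Roman dominating function~\(\func\) of finite value and extract a finite dominating set from its support. The key observation is that, since each label lies in~\(\set{0,1,2}\) and contributes a nonnegative integer to the sum~\(\RDV\func = \sum_{\v\in\V}\func\v\), a finite value forces all but finitely many vertices to carry the label~\(0\). Thus the set~\(D \definedas \set{\v\in\V : \func\v \ne 0}\) of positively labelled vertices is finite. To see that~\(D\) dominates~\(\G\), take any~\(\v \notin D\); then~\(\func\v = 0\), so the defining property of a Roman dominating function supplies a neighbour~\(\va\) with~\(\func\va = 2\), whence~\(\va \in D\). Therefore~\(D\) is a finite dominating set.

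I anticipate no serious obstacle, as both directions are essentially definitional. The only step deserving explicit care is the finiteness argument in the converse: one must note that positive integer labels cannot sum to a finite total unless only finitely many of them are nonzero, which is immediate once one observes that each nonzero label contributes at least~\(1\) to~\(\RDV\func\).
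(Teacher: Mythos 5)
Your proposal is correct and follows essentially the same route as the paper: labelling a finite dominating set with~\(2\)s for one direction, and taking the set of positively labelled vertices (the paper's~\(\f\inv\set{1,2}\)) as the finite dominating set for the converse. The only cosmetic difference is that the paper splits off the finite-graph case as tautological, whereas your argument handles all cases uniformly — which is fine.
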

\begin{proof}
  Let~\(\G\) be a graph. If~\(\G\) is finite, then the statement is tautological.
  Assume hence that~\(\G\) has at least~\(\Infty\) many vertices.

  If~\(\G\) admits a finite dominating set~\(\X\), assign the number~\(2\) to each vertex in~\(\X\) and~\(0\) to every other vertex.
  Since~\(\X\) is finite, this yields a roman domination scheme with finite value.

  If, on the other hand,~\(\G\) admits a roman domination scheme~\(\f\) with finite value, then the set~\(\f\inv\set{1,2}\) must be a finite dominating set.
\end{proof}

\begin{observation}
  Infinite but locally finite graphs cannot admit a finite dominating set\footnote{Proving this is a simple exercise.} and hence never have finite Roman domination number.
\end{observation}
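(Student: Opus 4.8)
The plan is to argue by contradiction via a direct counting argument, and then to invoke the theorem proved immediately above to obtain the ``hence'' clause essentially for free.

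First I would suppose, for the sake of contradiction, that an infinite but locally finite graph~\(\G = (\V,\E)\) does admit a finite dominating set~\(\X\). By the definition of domination, every vertex of~\(\G\) lies in the \emph{closed neighbourhood} of~\(\X\), that is, in the set
\[
  \X \cup \bigcup_{\v\in\X}\Neigh\v.
\]
The key step is then to observe that this set is finite. Because~\(\G\) is locally finite, each~\(\Neigh\v\) is a finite set; since~\(\X\) itself is finite, the union~\(\bigcup_{\v\in\X}\Neigh\v\) is a finite union of finite sets and hence finite, and adjoining the finitely many vertices of~\(\X\) keeps the total finite.

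But domination means precisely that every vertex of~\(\G\) already belongs to this closed neighbourhood, so~\(\V\) would itself be finite, contradicting the hypothesis that~\(\G\) is infinite. This contradiction shows that no finite dominating set can exist. The remaining ``hence'' part then follows at once: by the theorem above, a graph has finite Roman domination number if and only if it admits a finite dominating set, so an infinite locally finite graph, possessing no such set, satisfies~\(\RDN\G = \infty\).

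I do not expect any serious obstacle; the whole argument collapses into a single short paragraph. The only point requiring any care is the bookkeeping that guarantees finiteness of the closed neighbourhood, and this rests squarely on the hypothesis of local finiteness, which is exactly what rules out dominating an infinite graph from a finite set.
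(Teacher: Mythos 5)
Your proof is correct and is precisely the ``simple exercise'' the paper's footnote alludes to (the paper gives no written proof): a finite dominating set's closed neighbourhood is finite by local finiteness, yet must cover all of~\(\V\), contradicting infiniteness, and the ``hence'' clause follows from the preceding theorem exactly as you say. Nothing is missing.
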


The final piece of terminology we need from the final case is the following.

\begin{definition}
  A finite graph is called \emph{Roman} if it admits a Roman dominating function~\(f\) of minimum value with~\(f\inv1 = \emptyset\).
\end{definition}

\subsection{Graphings}

We pass now from arbitrary infinite graphs to those embedded in a measure space, though we introduce only the necessary basics.
An in-depth look at graphings can be found in \cite{hombook}.

\begin{definition}
  Let~\((\V,\E)\) be a graph,let~\(\v\in\V\), and let~\(A\subset\V\).
  We denote by
  \[
    \deg_{A}\v
  \]
  the number of neighbours of~\(\v\) in~\(A\).
\end{definition}

\begin{definition}
  Let~\(X = (\V,\mathcal A)\) be a Borel space, and let~\(\m\) be a probability measure on~\(X\).
  A \emph{graphing} on~\((X,\m)\) is a graph~\(\G = (\V,\E)\) such that~\(\E\) is Borel when viewed as a subset of the product space~\(X\times X\) and such that for any two sets~\(A,B\in\mathcal A\) we have
  \begin{equation}
    \int_{A}\deg_{B}\D\m = \int_{B}\deg_{A}\D\m.\label{edgemeasure}
  \end{equation}

  Note the double occurrence of the set~\(\V\), once as the ground set for our probability space and once as the set of vertices of our graph.
\end{definition}

We shall abuse notation extensively by referring to the tuple~\((\V,\E)\) as a graphing when the sigma algebra and the measure are clear from context and by writing~\(\v\in\G\) when really we mean~\(\v\in\V\).

Graphings are usually considered as Borel graphs, but it turns out that the edge measure condition \ref{edgemeasure} already ensures that if we consider the completion\footnote{The completion of a measure space is the smallest measure space containing it as a subspace which is complete, that is, in which every null set is measurable.}~\(\mathcal{A}'\) of~\(\mathcal A\), we have~\(\forall A\in\mathcal{A}' : \Neigh(A) \in \mathcal{A}'\) (\cite{hombook}, exercise 18.32).

\begin{definition}
  To emphasise that we are talking about the Lebesgue measurable setting (in particular, our measure is complete and our sigma algebra need not be the Borel sigma algebra), we shall refer to such graphings as \emph{probability graphs}.
\end{definition}

\section{Measurable Roman Domination}

\begin{definition}
  Let~\(\G = (\V,\E)\) be a probability graph.
  A Roman dominating function~\(\f\) on~\(\G\) is called \emph{measurable} if the sets~\(\f\inv2\) and~\(\f\inv1\) are both measurable.
\end{definition}

Note: this implies that~\(\f\inv0\) is measurable as well.

\begin{definition}
  Let~\(\G = (\V,\E)\) be a probability graph, and let~\(\f\) be a measurable Roman dominating function for~\(\G\).
  The \emph{measured value} of~\(\f\) is
  \[
    \RDV\f \definedas 2\cdot\m\f\inv2 + \m\f\inv1.
  \]
  We call
  \[
    \RDM\G \definedas \inf_{\f : \f \textup{ is a measurable Roman dominating function for \G}} \RDV\f
  \]
  the \emph{Roman domination measure} of~\(\G\).
\end{definition}

The following is immediately clear.

\begin{observation}
  The measured value of any measurable Roman dominating function is between~\(0\) and~\(2\).
  The Roman domination measure of any probability graph is between~\(0\) and~\(1\).
\end{observation}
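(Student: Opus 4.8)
The plan is to treat the two claims separately, both ultimately resting on the fact that~\(\m\) is a probability measure. For the first claim, I would begin by noting that the preimages~\(\f\inv0\),~\(\f\inv1\), and~\(\f\inv2\) form a partition of~\(\V\), so by finite additivity~\(\m\f\inv0 + \m\f\inv1 + \m\f\inv2 = \m\V = 1\). The lower bound~\(\RDV\f \ge 0\) is then immediate, since both~\(\m\f\inv2\) and~\(\m\f\inv1\) are non-negative. For the upper bound, writing~\(b \definedas \m\f\inv1\) and~\(c \definedas \m\f\inv2\), I would estimate~\(\RDV\f = 2c + b \le 2c + 2b = 2(b+c) \le 2\), where the final inequality uses~\(b + c \le 1\); this last fact follows from the partition identity together with~\(\m\f\inv0 \ge 0\).

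For the second claim, the lower bound~\(\RDM\G \ge 0\) holds because~\(\RDM\G\) is an infimum of quantities each bounded below by~\(0\) by the first claim. The upper bound~\(\RDM\G \le 1\) requires exhibiting a single measurable Roman dominating function whose measured value is at most~\(1\). I would use the constant labelling~\(\f \equiv 1\). This is a valid Roman dominating function because the implication defining the Roman domination condition is satisfied vacuously: no vertex carries the label~\(0\). It is measurable because~\(\f\inv1 = \V\) and~\(\f\inv2 = \emptyset\) are both measurable. Its measured value is~\(2\cdot\m\emptyset + \m\V = 0 + 1 = 1\), so the infimum defining~\(\RDM\G\) is at most~\(1\).

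The argument is entirely routine, and I do not expect any genuine obstacle. The only point deserving a moment's care is confirming that the constant-\(1\) labelling really does satisfy the Roman domination condition — which it does vacuously — rather than attempting a dominating-set construction, which would wrongly demand that the graph admit a small dominating set. Everything else reduces to the normalisation~\(\m\V = 1\) and the non-negativity of the measure.
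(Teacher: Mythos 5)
Your proof is correct. The paper offers no proof at all --- it simply declares the observation ``immediately clear'' --- and your argument supplies exactly the details one would expect, in particular the key point that the constant-\(1\) labelling vacuously satisfies the Roman condition and so witnesses \(\RDM\G \le 1\).
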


Put differently, Roman domination measure tells us what percentage of vertices we have to use (weighted by their label) in order to dominate the graph.

In order to compare our new definition to the finite case, we slightly modify the latter.

\begin{definition}
  Let~\(\G = (\V,\E)\) be a graph.
  The \emph{normalised Roman domination number} of~\(\G\) is
  \[
    \nRDN\G \definedas \frac{\RDN\G}{\abs\V}.
  \]
\end{definition}

We have purposely used the same notation as for the Roman domination measure, since the definitions are compatible in a natural way.

\begin{lemma}
  Let~\(\G = (\V,\E)\) be a finite graph endowed with the normalised counting measure, that is, turned into a probability graph by prescribing~\(\mathcal A \definedas \powerset\V\) and
  \[
    \forall X\in\powerset\V : \m{}X \definedas \frac{\abs X}{\abs\V}.
  \]
  Then its normalised Roman domination number (as a finite graph) is equal to its Roman domination measure (as a probability graph).
\end{lemma}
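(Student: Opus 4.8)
The plan is to unfold both definitions and observe that, under the normalised counting measure, the measured value of every Roman dominating function is exactly its finite value divided by the number of vertices; the claimed equality then follows by taking infima on both sides.

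First I would dispose of measurability: since~\(\mathcal A = \powerset\V\), every subset of~\(\V\) is measurable, so every Roman dominating function~\(\f\) is automatically measurable. Consequently the infimum defining~\(\RDM\G\) ranges over \emph{all} Roman dominating functions for~\(\G\), which are precisely the functions competing in the definition of~\(\RDN\G\).

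Next I would carry out the central computation. For an arbitrary Roman dominating function~\(\f\), the definition of the normalised counting measure gives
\[
  \RDV\f = 2\cdot\m\f\inv2 + \m\f\inv1 = \frac{2\abs{\f\inv2} + \abs{\f\inv1}}{\abs\V} = \frac{\sum_{\v\in\V}\f\v}{\abs\V},
\]
where the last step merely records that each vertex labelled~\(2\) contributes~\(2\) and each vertex labelled~\(1\) contributes~\(1\) to the finite value~\(\sum_{\v\in\V}\f\v\). Thus the measured value of~\(\f\) equals its finite value divided by~\(\abs\V\). Taking the infimum over all~\(\f\) and pulling out the constant factor yields
\[
  \RDM\G = \inf_{\f}\frac{\sum_{\v\in\V}\f\v}{\abs\V} = \frac{1}{\abs\V}\inf_{\f}\sum_{\v\in\V}\f\v.
\]
Since~\(\G\) is finite there are only~\(3^{\abs\V}\) labellings, so the infimum is attained; it therefore equals~\(\min_{\f}\sum_{\v\in\V}\f\v = \RDN\G\), and we conclude~\(\RDM\G = \RDN\G/\abs\V = \nRDN\G\).

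I expect no genuine obstacle here, as the argument is a direct verification. The only points that demand care are the deliberate notational collision between the finite value and the measured value (both denoted~\(\RDV\f\)), which I keep straight by tracking whether the sum runs over vertices or the integral over the measure, and the passage from infimum to minimum, which is legitimate precisely because the space of labellings is finite.
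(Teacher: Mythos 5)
Your proof is correct and is exactly the argument the paper has in mind: the paper's own proof just says the claim is ``immediate from the definitions,'' and your computation (automatic measurability under~\(\powerset\V\), measured value equals finite value over~\(\abs\V\), infimum attained by finiteness) is precisely the spelled-out version of that verification.
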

\begin{proof}
  Again, this is immediate from the definitions.
\end{proof}

One distinction of which we need to be aware in the measurable setting is that the Roman domination measure, being an infimum,  may not actually be attainable.

\begin{definition}
  A probability graph is called \emph{Mithraic}\footnote{After the Roman cult that baffles historians to this day.} if it does not admit a Roman dominating function of minimum measured value.
\end{definition}

In particular, finite graphs are never Mithraic.

We expand our definition of Roman graphs to include probability graphs.

\begin{definition}
  A probability graph is called \emph{Roman} if it is non-Mithraic and admits a Roman dominating function~\(f\) of minimum measured value with~\(f\inv1 = \emptyset\),
  or if it is Mithraic and for every~\(\ep{}>0\), a Roman dominating function~\(f_{\ep{}}\)  with~\(f_{\ep{}}\inv1 = \emptyset\) can be constructed whose measured value exceeds the graph's Roman domination measure by no more than~\(\ep{}\).
\end{definition}

Note that a finite graph is Roman as a finite graph if and only it is Roman as a probability graph when endowed with the normalised counting measure.

We immediately see that we can ignore null sets for this definition.

\begin{lemma}
  Let~\(\G\) be a probability graph that admits a Roman dominating function~\(\f\) of minimum measured value such that~\(\m{}\f\inv1 = 0\) or that is Mithraic and  for every~\(\ep{}>0\), admits a Roman dominating function~\(f_{\ep{}}\) with~\(\m{}f_{\ep{}}\inv1 = 0\) whose measured value exceeds the Roman domination measure of~\(G\) by no more than~\(\ep{}\).
  Then~\(\G\) is Roman.
\end{lemma}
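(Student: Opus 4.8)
The plan is to observe that the hypothesis differs from the definition of \emph{Roman} only in that the set of vertices labelled~\(1\) is there required merely to be null rather than empty, and to remove this discrepancy by the obvious relabelling: promote every~\(1\) to a~\(2\). Concretely, given any measurable Roman dominating function~\(\f\) for~\(\G\) with~\(\m\f\inv1 = 0\), I would define~\(\f'\) by setting~\(\f' \definedas 2\) on~\(\f\inv1\cup\f\inv2\) and~\(\f' \definedas 0\) on~\(\f\inv0\), so that~\(\f'\inv1 = \emptyset\) outright.

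Three things then need checking. First, that~\(\f'\) is again a Roman dominating function: since the zero-set is unchanged and the two-set only grows, every vertex~\(\v\) with~\(\f'\v = 0\) still has a neighbour~\(\va\) with~\(\f'\va = 2\), namely the very one that witnessed the condition for~\(\f\). Second, that~\(\f'\) is measurable: its two-set is~\(\f\inv2\cup\f\inv1\), the union of a measurable set with a null set, and here I would invoke completeness of the measure (which holds by assumption in our Lebesgue-measurable setting) to conclude that this union is measurable. Third, that the measured value is preserved: as~\(\f\inv2\) and~\(\f\inv1\) are disjoint, \(\RDV{\f'} = 2\m(\f\inv2\cup\f\inv1) = 2\m\f\inv2 + 2\m\f\inv1 = 2\m\f\inv2 = \RDV\f\), using~\(\m\f\inv1 = 0\) in the penultimate step, so the relabelling costs nothing.

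With these in hand the two cases of the hypothesis map directly onto the two clauses of the definition. In the first case~\(\G\) admits a minimum-value~\(\f\), hence is automatically non-Mithraic, and~\(\f'\) is then a minimum-value Roman dominating function with empty one-set, satisfying the first clause. In the second case~\(\G\) is Mithraic by assumption, and applying the construction to each~\(f_{\ep}\) yields functions~\(f_{\ep}'\) with empty one-set whose measured value still exceeds~\(\RDM\G\) by no more than~\(\ep\), satisfying the second clause. Either way~\(\G\) is Roman.

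I expect no genuine obstacle here: the content is entirely in noticing that a null one-set can be absorbed into the two-set for free, and the only point requiring any care is the appeal to completeness that keeps~\(\f'\) measurable.
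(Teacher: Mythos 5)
Your proof is correct and takes essentially the same route as the paper: both arguments promote the (null) set of vertices labelled~\(1\) to label~\(2\), then verify that domination, measurability, and the measured value are all preserved, handling the Mithraic and non-Mithraic cases by the corresponding clauses of the definition. One minor remark: your appeal to completeness is unnecessary, since~\(\f\inv1\) is itself measurable by the definition of a measurable Roman dominating function, so~\(\f\inv2\cup\f\inv1\) is measurable simply as a union of two measurable sets.
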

\begin{proof}
  As in the finite case, replacing ones by twos does not change whether any vertex is dominated.
  Hence, we need only check measurability.

  But~\(\f\inv2\) and~\(\f\inv1\) are both measurable by definition of a Roman dominating function.
  And since~\(\f\inv1\) is null, the measured value does not change.
\end{proof}

\section{Roman Domination is not Domination}

We give an example to show that the existence of the label~\(1\) is not merely smoke and mirrors, but actually changes the situation even in the infinite setting.

Of course, graphs with enough isolated vertices are an immediate example, but we need not go that far.

\begin{example}
  Consider the probability space~\(\intervalco{0,1}\) with the Lebesgue measure.
  Prescribe then that a vertex pair~\(v, w\) shall be connected by an edge if and only if~\(v+\frac{1}{4} \in \set{w,w+1}\).
  In other words, our graph is made out of uncountably many cycles of length~\(4\).
  This is a graphing.

  Suppose we wanted to find a Roman dominating function for this graph which does not use the label~\(1\).
  Then in each connected component, we would need to label at least two vertices with~\(2\), thus labelling at least half the vertices and producing a value of at least~\(2\cdot\frac{1}{2} = 1\).

  This is not optimal: labelling each vertex in~\(\intervalco{0,\frac{1}{4}}\) with~\(2\), each vertex in~\(\intervalco{\frac{1}{2},\frac{3}{4}}\) with~\(1\), and the rest with~\(0\) produces a Roman dominating function of measured value~\(2\cdot\frac{1}{4} + \frac{1}{4} = \frac{3}{4}\).
\end{example}

\section{The Main Result}

In this chapter, we compute the Roman domination measure of the irrational cycle graph.
We have chosen this example because it is a well-known counterexample in the theory of Borel colouring.

\subsection{Irrational Cycle Graphs}

The underlying probability space for this chapter is the one-dimensional manifold~\(S^{1}\) normalised to have a circumference of~\(1\) and endowed with the Lebesgue measure.
The graph theorist uncomfortable with manifolds may instead imagine the interval~\(\intervalco{0,1}\) endowed with the Lebesgue measure and glued together such that going over the edge wraps around to the other side.

Another useful way to think about this space is as~\(\RR/\ZZ\), endowed with the quotient measure, which is what we shall use for most computations in this section.

To decide where the edges are, choose an~\(\alpha{}\in\intervaloo{0,1}\setminus\QQ\).
Two vertices are connected by an edge if and only if the length of the arc between them is equal to~\(\alpha{}\), or in other words,~\(v\) and~\(w\) are connected if and only if~\(\abs{v - w} = \alpha\) in~\(\RR/\ZZ\).

The resulting graph, which we shall call~\(G_{\alpha{}}\), is~\(2\)-regular, and because~\(\alpha{}\) is irrational, it contains no finite cycles.
In particular, it is~\(2\)-colourable\footnote{This can be seen, for example, via the De Bruijn-Erd\H{o}s theorem (\cite{bruijn}).}.

But now comes the interesting discovery: even though two colours, say red and blue, suffice to properly colour the vertices of~\(\G_{\alpha{}}\), there is no such colouring where the set~\(\X \definedas \set{\v\in\G_{\alpha{}} : \v \textup{ is blue}}\) is measurable (or even Borel).
A deeper delve into this surprising result can be found in \cite{borelsurvey}.
It turns out that in order to have the colour classes be measurable, one needs at least three colours -- one more than the chromatic number of~\(\G_{\alpha{}}\).
Three is then called the \emph{measurable chromatic number} of~\(\G_{\alpha{}}\), and finding the measurable chromatic number of infinite graphs is its own fascinating field of research, see for example \cite{conley2016}.

\subsection{Roman Domination of~\(\G_{\alpha{}}\)}

In order to work with the irrational cycle graph, we shall need the following technical lemma.

\begin{lemma}\label{alphaGoesEverywhere}
  Let~\(x\in\intervalcc{0,1}\), and let~\(\alpha{}\in\intervaloo{0,1}\setminus\QQ\).

  Then for every~\(\ep{} > 0\), there exists a~\(k\in\NN\) such that~\(k\alpha{}\in \intervaloo{x - \ep{}, x+\ep{}}\) (in~\(\RR/\ZZ\)).
\end{lemma}
\begin{proof}
  This follows for example from the fact that~\(\set{n\alpha{}}_{n\in\NN}\) is a low-discrepancy sequence, as shown in \cite{niederreiter}.
\end{proof}

For the remainder of this text, we shall quietly assume that all our computations are done in~\(\RR/\ZZ\) instead of mentioning this fact every time.

\renewcommand\x{G_{\alpha}}
\renewcommand\y{\intervalco{0,\ep}}
\renewcommand\z{e}
\begin{theorem}\label{schemeTheorem}
  Let~\(\ep{} > 0\).
  Then there exists a Roman dominating function~\(f\) for~\(G_{\alpha{}}\) with~\(\RDV f < \frac{2}{3} + \ep{}\) and~\(f\inv1 = \emptyset\).

  In particular, the Roman domination measure of any irrational cycle graph is no larger than~\(\frac{2}{3}\).
\end{theorem}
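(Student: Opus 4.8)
The condition $f^{-1}(1)=\emptyset$ means that I am really choosing a measurable set $S\definedas f^{-1}(2)$, asking it to be a dominating set, and paying $\RDV f = 2\mu(S)$. So it suffices to produce, for every $\epsilon>0$, a measurable dominating set $S$ for $G_\alpha$ with $\mu(S)<\tfrac13+\tfrac{\epsilon}{2}$. Since the neighbours of $v$ are exactly $v+\alpha$ and $v-\alpha$, a vertex $v$ is dominated precisely when $v\in S\cup(S+\alpha)\cup(S-\alpha)$, so I want these three translates to cover $\RR/\ZZ$ and, to keep $\mu(S)$ near $\tfrac13$, to be almost disjoint. The guiding intuition is that each connected component is the bi-infinite path $\{v+k\alpha : k\in\ZZ\}$, on which the economical dominating pattern is ``every third vertex'' of density $\tfrac13$; the whole difficulty is realising this pattern measurably and coherently across the uncountably many components — the same phenomenon that forces the measurable chromatic number above the ordinary one.

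The plan is to build $S$ from a Rokhlin tower for the rotation $R_\alpha\colon x\mapsto x+\alpha$, which is an ergodic, hence aperiodic, measure-preserving automorphism of $(\RR/\ZZ,\mu)$. I would fix a height $n=3m$ and take a measurable base $B$ whose levels $B,R_\alpha B,\dots,R_\alpha^{\,n-1}B$ are pairwise disjoint with union $T$ satisfying $\mu(T)>1-\tfrac{\epsilon}{2}$, writing $E\definedas(\RR/\ZZ)\setminus T$ for the small error set. Then I set
\[
  S \definedas E \cup \bigcup_{\substack{0\le i\le n-1\\ i\equiv 1\ (\mathrm{mod}\ 3)}} R_\alpha^{\,i}B,
\]
and let $f$ equal $2$ on $S$ and $0$ elsewhere, so that automatically $f^{-1}(1)=\emptyset$.

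It then remains to check three things. Measurability is immediate, as $B$, its translates, and $E$ are all measurable. For the measure bound, exactly $m$ of the $n=3m$ equal-measure levels lie in $S$, so $\mu(S)\le m\,\mu(B)+\mu(E)=\tfrac13\mu(T)+\mu(E)<\tfrac13+\tfrac{\epsilon}{2}$, whence $\RDV f=2\mu(S)<\tfrac23+\epsilon$. For domination, a vertex of $E$ lies in $S$ and needs nothing, while a vertex $x=R_\alpha^{\,i}b$ in level $i$ has neighbours $R_\alpha^{\,i+1}b$ and $R_\alpha^{\,i-1}b$ in the adjacent levels, so a short check on $i\bmod 3$ shows $x$ is always adjacent to a chosen level; the only delicate cases are the bottom level $0$ (dominated by level $1$) and the top level $n-1$ (dominated by level $n-2\equiv1\ (\mathrm{mod}\ 3)$), which is where the divisibility $3\mid n$ is used. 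Taking the infimum over $\epsilon$ gives the ``in particular'' clause $\RDM{G_\alpha}\le\tfrac23$.

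I expect the genuine obstacle to be conceptual rather than computational: no exact density-$\tfrac13$ tiling $S\sqcup(S+\alpha)\sqcup(S-\alpha)=\RR/\ZZ$ can be measurable, so the value $\tfrac23$ can only be approached by this method, and the Rokhlin tower is exactly the device that trades exactness for an arbitrarily small error set $E$. The one step demanding real care is the boundary analysis at the top and bottom of the tower, ensuring the every-third-level selection still dominates there. If one wishes to avoid invoking the Rokhlin lemma, the same tower can be produced explicitly from Lemma~\ref{alphaGoesEverywhere}: choosing $N$ with $N\alpha$ within $\delta$ of $0$ and taking a short interval as base yields, via the first-return (three-gap) structure, a Kakutani–Rokhlin partition of the circle into finitely many interval columns on which the identical selection works up to an error controlled by $\delta$.
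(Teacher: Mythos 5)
Your proof is correct, but it takes a genuinely different route from the paper's. You invoke the Rokhlin lemma to get a uniform tower $B, R_\alpha B,\dots,R_\alpha^{3m-1}B$ of height divisible by~$3$, select every third level, and absorb the residual set $E$ wholesale into the dominating set; your boundary analysis at the top and bottom levels and the counting argument $\mu(S)\le \tfrac13\mu(T)+\mu(E)$ are both sound, and the domination check is airtight since the largest level index congruent to $0 \pmod 3$ is $n-3$, so its successor level always exists. The paper instead builds the tower structure by hand: it defines the hitting-time function $d(v)=\min\{k\in\NN : v-k\alpha\in[0,\ep)\}$ (well-defined by Lemma~\ref{alphaGoesEverywhere}), takes $X=d^{-1}(3\NN)$, and bounds $\mu(X)$ not by counting levels (which here have unequal, non-explicit measures, since the tower is of infinite, non-uniform height) but by showing $X$, $X+\alpha$, $X-\alpha$ are pairwise disjoint outside the small set $[0,\ep)\cup([0,\ep)-\alpha)$ and using translation invariance to get $3\mu X\le 1+4\ep$. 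What your approach buys: it is shorter if Rokhlin is taken as given, and it generalises immediately beyond rotations to any $2$-regular graphing generated by an aperiodic measure-preserving transformation (which is relevant to the generalisation announced in the paper's conclusion). What the paper's approach buys: it is self-contained, needing only the density statement of Lemma~\ref{alphaGoesEverywhere} rather than a nontrivial ergodic-theoretic lemma, and it produces a completely explicit set. Your closing remark that the tower could alternatively be built from Lemma~\ref{alphaGoesEverywhere} via first-return structure essentially reconstructs the paper's spirit, so the two arguments are cousins; but as written, yours rests on different machinery.
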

\begin{proof}
  Fix an arbitrary~\(\ep > 0\).
  We construct a dominating set~\(X\) for~\(G_{\alpha}\) with measure~\(\frac{1}{3} + c\cdot\ep\), where~\(c\) is a constant.
  Assigning the label~\(2\) to each vertex in~\(X\) and~\(0\) to all other vertices then implies the claim.

  We first define the ``distance'' function
  \[
    d : G_{\alpha}\to\NN, v \mapsto \min\set{k\in\NN : v-k\alpha \in \intervalco{0,\ep}},
  \]
  where the interval is, as always, understood to live in~\(\RR/\ZZ\).
  This is well-defined due to \cref{alphaGoesEverywhere}. %and by \cref{maximumLength} there exists an~\(M\in\NN\) such that~\(\forall v\in G_{\alpha} : dv \leq M\).

  We now claim the following:
  let~\(v\in \x\) with~\(dv \neq 0\).
  Then~\(d(v - \alpha) = dv - 1\).

  Indeed: if~\(dv \neq 0\), then~\(v\notin\y\).
  Thus we either have~\(v-\alpha\in\y\) and thus~\(d(v-\alpha) = 0\) and~\(dv = 1\), or we have~\(d(v-\alpha) = k\) for some~\(k > 0\), in which case~\(dv = d(v-\alpha) + 1\) because~\(v\notin\y\).

  We now set~\(X \definedas d\inv(3\NN)\).

  To see that~\(X\) is measurable, notice that for any~\(n\in\NN\), we have
  \[d\inv\set{0,\dots,n} = \y + \set{0,\alpha, 2\alpha,\dots,n\alpha},\] which is measurable.
  Thus for any~\(n\in\NN_{\indexify{>0}}\) the set
  \[d\inv\set{n} = d\inv\set{0,\dots,n}\setminus d\inv\set{0,\dots,n-1}\]
  is measurable, whence 
  \[X = \bigcup_{n\in\NN}d\inv\set{3n}\]
  is measurable.

%  But since~\(d\) is bounded by~\(M\), we find an~\(n\in\NN\) such that~\(d\inv\set{0,3,6,\dots,3n} = d\inv(3\NN) = X\).

  To show that~\(X\) dominates~\(\x\), pick any~\(v\in\x\).
  We distinguish three cases:
  If~\(v\in \y\), then~\(v\in X\) and thus~\(X\) dominates~\(v\).
  If~\(v\in \y - \alpha\), then a neighbour of~\(v\) is in~\(\y\) and thus~\(X\) dominates~\(v\).

  It remains to examine the case where~\(v\) is neither in~\(\y\) nor in~\(\y - \alpha\).
  This means that~\(dv \neq 0\) and~\(d(v+\alpha) \neq 0\), which by our earlier claim implies that~\(d(v-\alpha) = dv - 1\) and~\(dv = d(v+\alpha) - 1\).
  Hence,~\(d\set{v-\alpha,v,v+\alpha} \cap 3\NN \neq \emptyset\), whereby~\(X\) must dominate~\(v\).

  It remains to compute the Lebesgue measure of~\(X\).

  Notice first that any~\(v\in X\cap X+\alpha\) must fulfil~\(dv \in 3\NN\) and~\(d(v+\alpha) \in 3\NN\), which is only possible if~\(d(v+\alpha) = 0\) or, in other words,~\(v \in \y-\alpha\).
  Similarly, any~\(v\in X \cap X-\alpha\) must fulfil~\(dv \in 3\NN\) and~\(d(v-\alpha) \in 3\NN\), which is only possible if~\(dv = 0\) or, in other words,~\(v \in \y\).

  From this, we conclude that the sets~\(X\),~\(X - \alpha\), and~\(X + \alpha\) are disjoint outside of~\(\y \cup \y - \alpha\), which is used in the following estimate.
  We set~\(\z \definedas \y \cup \y-\alpha\) and write~\(\cupdot\) to signify a disjoint union.
  The translation invariance of the Lebesgue measure then yields
  \begin{align*}
    3\m X - 3\m\z &= \m(X-\alpha) - \m\z + \m X-\m\z + \m(X+\alpha) - \m\z \\
                  &\leq \m((X-\alpha)\setminus\z) + \m(X\setminus\z) + \m((X+\alpha)\setminus\z) \\
                  &= \m((X-\alpha)\setminus\z \cupdot X\setminus\z \cupdot (X+\alpha)\setminus\z)\\
                  &= \m(\x \setminus \z)\\
                  &= \m\x - \m\z.
  \end{align*}
  Rearranging yields~\(3\m X \leq \m\x + 2\m\z \leq 1 + 4\m\y = 1 + 4\ep\) and thus~\(\m X \leq \frac{1}{3} + \frac{4}{3}\ep\), as promised.
\end{proof}

We also establish a lower bound on the Roman domination measure of~\(G_{\alpha{}}\).

\begin{lemma}\label{parsprototo}
  Let~\(\G = (\V,\E)\) be a probability graph of finite maximum degree~\(\Delta\).
  Then for every measurable set~\(A\subset\V\), we have
  \[
    \m\Neigh A \leq (\Delta+1)\m A.
  \]
\end{lemma}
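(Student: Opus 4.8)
The plan is to feed a well-chosen pair of sets into the edge-measure condition~\eqref{edgemeasure} and read off the bound almost immediately. The key observation is that the neighbourhood splits as~\(\Neigh A \subseteq A \cup B\), where~\(B \definedas \Neigh A \setminus A\) collects those neighbours of~\(A\) lying outside~\(A\). Since~\(\Neigh A\) is measurable (by the completion result cited just after the definition of graphings) and~\(A\) is measurable, so is~\(B\), and monotonicity together with subadditivity gives~\(\m\Neigh A \leq \m A + \m B\). It therefore suffices to prove~\(\m B \leq \Delta\m A\).

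To bound~\(\m B\), I would estimate both sides of the edge-measure identity~\(\int_A \deg_B\,\D\m = \int_B \deg_A\,\D\m\). On the right, every vertex of~\(B\) is by construction adjacent to at least one vertex of~\(A\), so~\(\deg_A \v \geq 1\) for all~\(\v\in B\), whence~\(\int_B \deg_A\,\D\m \geq \m B\). On the left, the bounded-degree hypothesis gives~\(\deg_B \v \leq \deg\v \leq \Delta\) for all~\(\v\in A\), whence~\(\int_A \deg_B\,\D\m \leq \Delta\m A\). Chaining these two estimates through the identity yields~\(\m B \leq \Delta\m A\), and combining with the previous paragraph gives~\(\m\Neigh A \leq \m A + \Delta\m A = (\Delta+1)\m A\), as desired.

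The combinatorial heart of the argument is thus essentially trivial once the right pair~\((A,B)\) is chosen; the only points requiring care are measurability-theoretic. Specifically, I must know that~\(\Neigh A\) is measurable (so that~\(\m\Neigh A\) is even defined) and that~\(\deg_A\) and~\(\deg_B\) are measurable functions whose integrals are finite. The former is exactly the completion statement already invoked in the preliminaries, and the finite maximum degree~\(\Delta\) guarantees the latter, since each degree function is then bounded by~\(\Delta\) and~\(\m\) is a probability measure. I expect this measurability bookkeeping --- rather than any inequality --- to be the only genuine obstacle, and even it is dispatched by the structural properties of graphings already established.
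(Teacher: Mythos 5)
Your proposal is correct and is essentially the paper's own proof: both apply the edge-measure condition to the pair~\(\bigl(A,\,\Neigh A\setminus A\bigr)\), bound~\(\deg_{A}\geq 1\) on~\(\Neigh A\setminus A\) and~\(\deg_{\Neigh A\setminus A}\leq\Delta\) on~\(A\), and then add~\(\m A\) to account for neighbours lying inside~\(A\). The only cosmetic difference is that you give the set~\(\Neigh A\setminus A\) the name~\(B\) and spell out the measurability bookkeeping, which the paper leaves implicit.
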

\begin{proof}
  Let~\(\G\) be a probability graph with finite maximum degree~\(\Delta\), and let~\(A\) be a measurable set of vertices of~\(\G\).
  By the edge measure condition of graphings, we have
  \begin{align*}
    \m(\Neigh(A)\setminus A) &= \int_{\Neigh(A)\setminus A}1\D\m\\
                             &\leq \int_{\Neigh(A)\setminus A}\deg_{A}\D\m\\
                             &=\int_{A}\deg_{\Neigh(A)\setminus A}\D\m\\
    &\leq \int_{A}\deg \D\m
  \end{align*}
  and thus
  \begin{align*}
    \m(\Neigh(A)) &\leq \int_{A}\deg \D\m + \m A\\
    &\leq\int_{A}\Delta \D\m + \m A\\
                  &=(\Delta+1)\m A,
  \end{align*}
  as promised.
\end{proof}

\begin{theorem}
  No irrational cycle graph can be covered by a Roman dominating function of measured value less than~\(\frac{2}{3}\).
\end{theorem}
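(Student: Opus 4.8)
The plan is to analyse an arbitrary measurable Roman dominating function~\(f\) for~\(\G_{\alpha}\) purely through the measures of its three label classes. Write~\(A \definedas f\inv2\),~\(B \definedas f\inv1\), and~\(C \definedas f\inv0\), and set~\(a \definedas \m A\),~\(b \definedas \m B\),~\(c \definedas \m C\). These three sets partition~\(\V\), so~\(a + b + c = 1\), and the quantity to be bounded below is~\(\RDV f = 2a + b\). The defining property of a Roman dominating function says exactly that every vertex labelled~\(0\) has a neighbour labelled~\(2\), that is,~\(C \subseteq \Neigh A\); since~\(A\) and~\(C\) are disjoint this sharpens to~\(C \subseteq \Neigh A \setminus A\).

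The crucial step is to bound~\(c\) in terms of~\(a\) using that~\(\G_{\alpha}\) is~\(2\)-regular. Here I would \emph{not} invoke the statement of \cref{parsprototo} directly---its bound~\(\m\Neigh A \leq 3\m A\) counts~\(A\) itself and turns out to be too lossy---but rather the estimate obtained en route in its proof, namely~\(\m(\Neigh A \setminus A) \leq \int_{A}\deg\D\m \leq \Delta\m A\) for a graph of maximum degree~\(\Delta\). With~\(\Delta = 2\) this yields~\(c \leq \m(\Neigh A \setminus A) \leq 2a\). Measurability of every set involved is not an issue: the label classes are measurable by definition of a measurable Roman dominating function, and~\(\Neigh A\) is measurable because completed graphings send measurable sets to measurable neighbourhoods.

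From~\(c \leq 2a\) and~\(a + b + c = 1\) we immediately obtain~\(3a + b \geq 1\). It then remains only to convert this into the desired lower bound on~\(2a + b\). Writing~\(2a + b = \tfrac{2}{3}(3a + b) + \tfrac{1}{3}b\) and using~\(b \geq 0\) together with~\(3a + b \geq 1\) gives~\(\RDV f = 2a + b \geq \tfrac{2}{3}(3a+b) \geq \tfrac{2}{3}\), which completes the argument. (Equivalently, one may run the trivial linear program minimising~\(2a+b\) subject to~\(3a+b\geq1\) and~\(a,b\geq0\), whose optimum~\(\tfrac23\) is attained at~\((a,b)=(\tfrac13,0)\), matching the construction of \cref{schemeTheorem}.)

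I expect the only real obstacle to be the second paragraph: the naive application of \cref{parsprototo} would give merely~\(c \leq 3a\) and hence the weaker bound~\(\tfrac{1}{2}\), so matching the upper bound of~\(\tfrac{2}{3}\) from \cref{schemeTheorem} hinges precisely on exploiting the disjointness of~\(C\) and~\(A\) to use the external-neighbourhood estimate rather than the full-neighbourhood one. Everything else is a short linear computation.
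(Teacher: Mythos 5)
Your proposal is correct and takes essentially the same route as the paper: it, too, bounds \(\m{}f\inv0 \leq 2\m{}f\inv2\) via \cref{parsprototo} and then solves the linear program minimising \(2a+b\) subject to \(3a+b\geq 1\), \(a,b\geq 0\). If anything, you are more careful on the one delicate point---the paper cites the statement of \cref{parsprototo} directly, whereas, exactly as you observe, the literal statement only yields \(\m{}f\inv0\leq 3\m{}f\inv2\), and it is the intermediate estimate \(\m(\Neigh A\setminus A)\leq\Delta\m{}A\) from the lemma's proof (together with the disjointness of \(f\inv0\) and \(f\inv2\)) that gives the required factor \(2\).
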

\begin{proof}
  Let~\(\f\) be a measurable Roman dominating function for~\(G_{\alpha{}}\).
  Due to \cref{parsprototo}, we have
  \begin{align*}
    \m{}f\inv0 \leq 2\m{}f\inv2,
  \end{align*}
  and because every vertex must be labelled, we know that
  \begin{align*}
    1 &= \m{}f\inv0 + \m{}\f\inv1 + \m{}f\inv2 \\
      &\leq \m{}\f\inv1 + 3\m{}f\inv2.
  \end{align*}
  Minimising~\(\m{}\f\inv1 + 2\m{}\f\inv2\) with this constraint yields the unique optimal solution
  \begin{align*}
    \m{}\f\inv1 = 0,\quad \m{}\f\inv2 = \frac{1}{3},
  \end{align*}
  as claimed.
\end{proof}

Combining this with \cref{schemeTheorem} yields the following.

\begin{corollary}
  The Roman domination measure of the irrational cycle graph is~\(\frac{2}{3}\).
\end{corollary}

The equation in the proof leads to another nice corollary.

\begin{corollary}
  Every irrational cycle graph is Roman.
\end{corollary}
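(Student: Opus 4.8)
The plan is to verify the definition of \emph{Roman} head-on by a case distinction on whether $G_{\alpha}$ is Mithraic, feeding off the uniqueness of the optimum already extracted in the lower-bound proof: any measurable Roman dominating function attaining the minimal measured value must place zero measure on the label~$1$. I would not try to decide which case actually holds; instead I would discharge the disjunction in the definition of \emph{Roman} by treating each branch separately under its own hypothesis.

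First, suppose $G_{\alpha}$ is non-Mithraic. Then by definition it admits a measurable Roman dominating function~$\f$ whose measured value equals its Roman domination measure, which the preceding corollary pins down as~$\frac{2}{3}$. I would then reuse the linear-programming step from the lower-bound proof: among all feasible triples $(\m\f\inv0,\m\f\inv1,\m\f\inv2)$ subject to $\m\f\inv0+\m\f\inv1+\m\f\inv2 = 1$ and $\m\f\inv0 \leq 2\m\f\inv2$, the value $\frac{2}{3}$ is attained only at $\m\f\inv1 = 0$ and $\m\f\inv2 = \frac{1}{3}$. Hence $\f$ is a minimiser with a null $1$-set, and the null-set lemma (which relaxes the requirement $\f\inv1 = \emptyset$ to $\m\f\inv1 = 0$) lets me conclude that $G_{\alpha}$ is Roman.

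Second, suppose $G_{\alpha}$ is Mithraic. Here \cref{schemeTheorem} already supplies exactly what the Mithraic branch of the definition asks for: for each $\ep > 0$ a Roman dominating function~$f$ with $f\inv1 = \emptyset$ and $\RDV f < \frac{2}{3} + \ep$. Since the Roman domination measure is $\frac{2}{3}$, each such $f$ overshoots it by less than $\ep$, so the required family of approximants with empty $1$-set is directly in hand, and $G_{\alpha}$ is Roman.

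The main obstacle is not computational but definitional: in the non-Mithraic branch the linear program only forces $\m\f\inv1 = 0$, whereas the definition of \emph{Roman} literally demands $\f\inv1 = \emptyset$. Bridging this gap is precisely the purpose of the earlier null-set lemma, so the delicate step is to invoke it correctly rather than to reprove anything. By contrast, actually settling whether $G_{\alpha}$ is Mithraic — equivalently, whether a measurable set of measure $\frac{1}{3}$ can \emph{efficiently} dominate $G_{\alpha}$ — is a genuinely harder measure-theoretic question that this case split deliberately avoids.
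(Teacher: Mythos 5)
Your proposal is correct and takes essentially the same route as the paper: a case split on whether the optimum $\frac{2}{3}$ is attained, using the linear-programming constraint from the lower-bound proof to force $\m\f\inv1 = 0$ in the non-Mithraic case (then passing to an empty $1$-set via the null-set lemma, which the paper does by relabelling inline), and invoking \cref{schemeTheorem} to supply the $\ep$-approximants with $f\inv1 = \emptyset$ in the Mithraic case.
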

\begin{proof}
  We have already established that~\(\frac{2}{3}\) is a lower bound on the Roman domination measure of~\(G_{\alpha{}}\).

  If this bound is sharp, by the optimisation in the preceding proof, the vertices labelled 1 are a null set, and hence the value of the Roman dominating function does not change if we label all of them 2 instead.
  Just as in the finite case, this is still a valid Roman dominating function.

  If the bound is not sharp, we have already shown how to explicitly construct an Roman dominating function of value arbitrarily close to~\(\frac{2}{3}\) with only labels 0 and 2.
\end{proof}

\subsection{\(G_{\alpha{}}\) is Mithraic}

We have established that~\(G_{\alpha{}}\) has Roman domination measure~\(\frac{2}{3}\).
In this section, we show that this optimal value cannot be attained.

Since~\(G_{\alpha{}}\) is Roman, we shall henceforth assume without loss of generality that any Roman dominating function of value~\(\frac{2}{3}\) contains no vertices labelled 1 at all.

We use the following shorthand for some of our proofs.

\begin{definition}
  Let \(\alpha\in\RR\).
  We write \(T_{\alpha}: \RR/\ZZ\to\RR/\ZZ, x\mapsto x+\alpha\).
\end{definition}

To show that \(G_{\alpha}\) is Mithraic, we establish several ``forbidden'' configurations for any Roman dominating function of minimal measured value, then show that no measurable function avoiding said configurations exist.

\begin{definition}
  Let~\(f\) be a Roman dominating function for~\(G_{\alpha{}}\).
  A vertex is called \emph{social} (under~\(f\)) if it has label 2 and at least one of its neighbours has label 2.
\end{definition}

We first show that it is impossible to have more than a null set of social vertices in a Roman dominating function of measure~\(\frac{2}{3}\).

\begin{lemma}\label{socialVerticesAreMeasurable}
  The set of social vertices is measurable.
\end{lemma}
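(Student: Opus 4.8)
The plan is to express the set of social vertices explicitly in terms of the preimages $f^{-1}2$, which are measurable by the definition of a measurable Roman dominating function, together with the translation maps $T_{\alpha}$ and $T_{-\alpha}$. A vertex $v$ is social precisely when $f(v) = 2$ and at least one of its two neighbours $v + \alpha$ or $v - \alpha$ also carries the label $2$. Since in $G_{\alpha{}}$ the neighbours of $v$ are exactly $v + \alpha$ and $v - \alpha$, the condition $f(v + \alpha) = 2$ is equivalent to $v + \alpha \in f^{-1}2$, that is, $v \in T_{\alpha}^{-1}(f^{-1}2) = (f^{-1}2) - \alpha$, and similarly for the other neighbour.

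Concretely, I would write the set $S$ of social vertices as
\[
  S = f^{-1}2 \cap \bigl( (f^{-1}2 - \alpha) \cup (f^{-1}2 + \alpha) \bigr).
\]
Here $f^{-1}2 - \alpha$ denotes the image of $f^{-1}2$ under $T_{-\alpha}$ and $f^{-1}2 + \alpha$ its image under $T_{\alpha}$. The main work is then to justify that each of these building blocks is measurable. The set $f^{-1}2$ is measurable by hypothesis. The translates $f^{-1}2 \pm \alpha$ are measurable because $T_{\alpha}$ and $T_{-\alpha}$ are measurable bijections of $\RR/\ZZ$ (indeed, translation preserves the Lebesgue measurable sets, by translation invariance of the Lebesgue measure). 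Finite unions and intersections of measurable sets are measurable, so $S$ is measurable.

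The only genuinely non-routine point, and the step I would treat most carefully, is verifying that the displayed formula for $S$ is correct -- that is, that membership in the right-hand side is logically equivalent to being social. I would check the two inclusions directly: if $v$ is social then $f(v) = 2$ gives $v \in f^{-1}2$, and one of the neighbour conditions places $v$ in one of the two translates, so $v$ lies in the intersection; conversely, any $v$ in the intersection satisfies $f(v) = 2$ and has a neighbour labelled $2$, hence is social. A small subtlety worth a remark is that the two neighbours $v + \alpha$ and $v - \alpha$ are genuinely distinct (because $2\alpha \notin \ZZ$, as $\alpha$ is irrational), though this does not affect the argument, since we only need the existence of \emph{some} neighbour labelled $2$, which the union already captures.

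Beyond this lemma, the broader goal -- showing $G_{\alpha{}}$ is Mithraic -- will require exploiting the measure-preserving and ergodic behaviour of the irrational rotation $T_{\alpha}$; measurability of $S$ is the natural first ingredient, since once $S$ is known to be measurable one can hope to argue that $\m S = 0$ for any optimal function and then derive a contradiction with measurability of $f^{-1}2$ itself. I expect the real obstacle to lie not here but in that later ergodicity argument; the present statement is a routine measurability check whose only care-point is the set-theoretic identity above.
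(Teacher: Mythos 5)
Your proof is correct and follows essentially the same route as the paper: both express the social set as a Boolean combination of $f\inv2$ and its translates $T_{\pm\alpha}f\inv2$, using translation invariance of Lebesgue measurability. The only (cosmetic) difference is that you intersect the union of translates with $f\inv2$, whereas the paper subtracts $f\inv0$ from it --- your formula is in fact slightly more robust, since it remains valid even without the paper's standing assumption that $f\inv1 = \emptyset$.
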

\begin{proof}
  As we know, the set~\(f\inv2\) is measurable.
  Because~\(T_{\alpha{}}\) is a translation, the sets~\(T_{\alpha{}}f\inv2\) and~\(T_{-\alpha{}}f\inv2\) are measurable.

  Hence their union~\(X \definedas T_{\alpha{}}f\inv2 \cup T_{-\alpha{}}f\inv2\) is measurable.

  Which vertices does this set contain?
  It contains all zeroes, since each of them must be to the left or to the right of a 2.

  It also contains all social vertices for the same reason.

  It contains no unsocial 2s since an unsocial 2 can by definition not be adjacent to another two.

  Thus, the set of social vertices is equal to~\(X\setminus f\inv\set{0}\), which is a measurable set.
\end{proof}

\begin{lemma}\label{socialVerticesAreNull}
  Let~\(f\) be a Roman dominating function for~\(G_{\alpha{}}\) of value~\(\frac{2}{3}\).
  Then the set of all social vertices is a null set.
\end{lemma}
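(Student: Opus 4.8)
The plan is to revisit the lower-bound estimate and keep track of the slack that social vertices contribute, rather than throwing it away. By the standing assumption of this subsection we may take $f$ to use no label $1$, so set $A\definedas f\inv2$; since the measured value equals $2\m A=\frac23$ we have $\m A=\frac13$ and $\m f\inv0=\frac23$. Because no vertex carries the label $1$, every label-$0$ vertex has a label-$2$ neighbour and hence lies in $\Neigh(A)$, while conversely any vertex of $\Neigh(A)\setminus A$ can only carry the label $0$; this yields the exact identity $f\inv0=\Neigh(A)\setminus A$.

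Next I would rerun the edge-measure computation of \cref{parsprototo}, but retain the contribution of edges inside $A$. Applying the edge measure condition to the sets $B\definedas\Neigh(A)\setminus A$ and $A$ gives $\int_{B}\deg_A\,\D\m=\int_A\deg_B\,\D\m$. On $B$ every vertex has at least one neighbour in $A$, so $\deg_A\geq1$ there and the left-hand side is at least $\m B=\m f\inv0=\frac23$. On the right-hand side, $G_{\alpha}$ is $2$-regular and each neighbour of a vertex $v\in A$ lies either in $A$ or in $B$, so $\deg_B v=2-\deg_A v$; integrating, the right-hand side equals $2\m A-\int_A\deg_A\,\D\m=\frac23-\int_A\deg_A\,\D\m$.

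Equating the two sides forces $\frac23\leq\frac23-\int_A\deg_A\,\D\m$, so $\int_A\deg_A\,\D\m=0$. It then remains to connect this integral with the social set $S$. By \cref{socialVerticesAreMeasurable} the set $S$ is measurable, and $S=\set{v\in A:\deg_A v\geq1}$ while $\deg_A$ vanishes on $A\setminus S$; hence $\int_A\deg_A\,\D\m\geq\m S\geq0$. Combining with the previous line gives $\m S=0$, as desired.

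The computation is short, and the one place that wants care is the bookkeeping step $\int_A\deg_B\,\D\m=2\m A-\int_A\deg_A\,\D\m$, which uses $2$-regularity together with the observation that a neighbour of an $A$-vertex that is not itself in $A$ automatically lands in $\Neigh(A)\setminus A$. The real content is conceptual rather than computational: the inequality in the lower bound is tight precisely when the social vertices form a null set, and recognising that the retained slack term is exactly $\int_A\deg_A$ is the key idea.
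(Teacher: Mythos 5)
Your proof is correct, and it takes a genuinely different (and in some ways sharper) route than the paper's. The paper argues by contradiction: assuming \(\m S = \ep > 0\), it splits \(f\inv2\) into the social part \(S\) and the unsocial part \(U\), bounds the measure of zeros each can dominate (at most \(2\m U\) and \(1\cdot\m S\) respectively, via the counting principle behind \cref{parsprototo}), and derives \(\frac23 \leq \frac23 - \ep\). You instead apply the edge-measure condition once, exactly, to \(A = f\inv2\) and \(B = f\inv0 = \Neigh(A)\setminus A\), and use \(2\)-regularity to write \(\deg_B = 2 - \deg_A\) on \(A\); this turns the paper's inequality into an identity with an explicit slack term \(\int_A \deg_A \D\m\), which is forced to vanish and which dominates \(\m S\). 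The two proofs rest on the same underlying tool — double counting edges between the \(2\)-set and the \(0\)-set via the graphing identity — but your organisation buys several things: it avoids the \(\ep\)-contradiction, it makes rigorous the paper's informal ``can cover at most two zeros per vertex'' bookkeeping (which is really an invocation of the edge-measure condition), and it yields the quantitative statement \(\m S \leq \int_A \deg_A \D\m = 0\), i.e.\ it exhibits exactly where the lower-bound argument is tight. The paper's version, in exchange, is more combinatorial in flavour and transfers verbatim to the subsequent lemma on needy vertices, where the case split (pairs of adjacent \(2\)s covering at most three zeros) is doing real work. One caveat that applies equally to both proofs: the edge-measure condition is stated in the paper for Borel sets, while \(A\) and \(B\) here are only Lebesgue measurable; the paper's standing convention (working in the completion) is what licenses this step, and you are on the same footing as the paper's own proof of \cref{parsprototo} in using it.
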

\begin{proof}
  Let~\(f\) be a~\(\frac{2}{3}\) value Roman dominating function for~\(G_{\alpha{}}\) and suppose the set~\(S\) of all social vertices (which is measurable by \cref{socialVerticesAreMeasurable}) had measure~\(\ep{}>0\).

  We now bound the volume of vertices of label 0.

  The set~\(U\) of all unsocial vertices can cover at most two zeros per vertex, or a total (due to \cref{parsprototo}) of~\(2\cdot\m{}U\).

  Each social vertex can cover at most one zero, or a total of~\(1\cdot\m{}S\).

  That means that the total amount of zeroes covered is no larger than
  \begin{align*}
    2\cdot\m{}U + 1\cdot\m{}S &= 2\cdot(\frac{1}{3}-\ep{}) + \ep{} \\
                        &= \frac{2}{3} - \ep{} \\
                        &< \frac{2}{3},
  \end{align*}
  a contradiction.
\end{proof}

There is another kind of vertices that we would like to avoid.

\begin{definition}
  Let~\(f\) be a Roman dominating function for~\(G_{\alpha{}}\). A vertex is called \emph{needy} (under~\(f\)) if it has label 0 and both of its neighbours have label 2.
\end{definition}

\begin{lemma}\label{needyVerticesAreMeasurable}
  The set of needy vertices is measurable.
\end{lemma}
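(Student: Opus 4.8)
The plan is to mimic the proof of \cref{socialVerticesAreMeasurable} almost verbatim, exploiting the fact that ``needy'' is a condition defined purely in terms of the labels of a vertex and its two neighbours, combined with the translation-invariance of measurability. A needy vertex~\(v\) is characterised by~\(f(v) = 0\), \(f(v-\alpha) = 2\), and~\(f(v+\alpha) = 2\). Since~\(v-\alpha\in f\inv2\) is equivalent to~\(v\in T_{\alpha}f\inv2\), and~\(v+\alpha\in f\inv2\) is equivalent to~\(v\in T_{-\alpha}f\inv2\), I would write the set of needy vertices directly as an intersection of three measurable sets.

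Concretely, the steps are as follows. First I would observe that~\(f\inv2\) is measurable by definition of a measurable Roman dominating function, and that~\(T_{\alpha}f\inv2\) and~\(T_{-\alpha}f\inv2\) are measurable because~\(T_{\alpha}\) and~\(T_{-\alpha}\) are translations, which preserve Lebesgue measurability. Likewise~\(f\inv0\) is measurable. Second, I would argue that a vertex~\(v\) is needy precisely when its right neighbour~\(v+\alpha\) carries label~\(2\) (i.e.~\(v\in T_{-\alpha}f\inv2\)), its left neighbour~\(v-\alpha\) carries label~\(2\) (i.e.~\(v\in T_{\alpha}f\inv2\)), and~\(v\) itself carries label~\(0\) (i.e.~\(v\in f\inv0\)). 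Hence the set of needy vertices equals
\[
  f\inv0 \cap T_{\alpha}f\inv2 \cap T_{-\alpha}f\inv2,
\]
a finite intersection of measurable sets, and is therefore measurable.

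There is essentially no obstacle here: the argument is purely set-theoretic once one has the stability of measurability under translation, which is already invoked in \cref{socialVerticesAreMeasurable}. The only point requiring any care is the bookkeeping of which translation corresponds to which neighbour—that~\(v\) has a neighbour labelled~\(2\) on the~\(+\alpha\) side exactly when~\(v\) lies in~\(T_{-\alpha}f\inv2\)—but this is immediate from the definition~\(T_{\alpha}x = x+\alpha\). I would therefore present this as a short direct computation rather than, as in the social case, carving the set out as a set difference; the intersection formulation is cleaner because neediness is a conjunction of three label conditions rather than a disjunction.
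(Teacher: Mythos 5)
Your proof is correct, and it rests on exactly the same two ingredients as the paper's own argument: the measurability of \(f\inv2\) (hence also of \(f\inv0\)) and the fact that the translations \(T_{\alpha}\), \(T_{-\alpha}\) preserve Lebesgue measurability. The difference is purely in the Boolean decomposition. The paper, mimicking its proof of \cref{socialVerticesAreMeasurable}, goes through complements: it forms \(f\inv2\cup T_{\alpha}f\inv2\), passes to the set of zeros whose left neighbour is also a zero, translates by \(-\alpha\) to get the corresponding right-neighbour set, takes the union (all non-needy zeros), and finally subtracts that union from \(f\inv0\). You instead write the needy set directly as the triple intersection
\[
  f\inv0 \cap T_{\alpha}f\inv2 \cap T_{-\alpha}f\inv2,
\]
which is the De Morgan dual of the paper's computation; your bookkeeping is right, since \(v-\alpha\in f\inv2 \iff v\in T_{\alpha}f\inv2\) and \(v+\alpha\in f\inv2 \iff v\in T_{-\alpha}f\inv2\). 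Your route is shorter and arguably cleaner: because neediness is a conjunction of three label conditions, it is expressible as an intersection with no need for complementation, and this sidesteps a notational wrinkle in the paper's write-up, where the set \(A\) is introduced as \(X = f\inv2\cup T_{\alpha}f\inv2\) but is subsequently used as the complement of \(X\). The only thing the paper's formulation buys is structural parallelism with the social-vertex lemma (where a set difference genuinely is needed, since that condition involves a disjunction); nothing mathematical is lost in your version.
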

\begin{proof}
  The set~\(f\inv2\) is measurable.

  Thus the set~\(T_{\alpha{}}f\inv2\) is measurable.

  Hence the set~\(X \definedas f\inv2 \cup T_{\alpha{}}f\inv2\) is measurable.
  This set, let us call it~\(A\), contains all vertices except those whose label is 0 and whose left neighbour's label is also 0.
  Hence that set must be measurable.

  Rotating that set by~\(-\alpha{}\) yields the set~\(B\) of all 0s whose right neighbour's label is also 0.
  Hence this set is measurable as well.

  Thus, the union~\(A\cup B\) is measurable and contains exactly all vertices whose label is 0 and who are not needy.

  Thus the set of all needy vertices is~\(f\inv0 \setminus (A\cup B)\), a measurable set.
\end{proof}

\begin{lemma}\label{needyVerticesAreNull}
  Let~\(f\) be a Roman dominating function for~\(G_{\alpha{}}\) of value~\(\frac{2}{3}\).
  Then the set of all needy vertices is a null set.
\end{lemma}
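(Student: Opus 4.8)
The plan is to run a double-counting argument on the edges joining the label-\(2\) vertices to the label-\(0\) vertices, with the edge-measure condition \eqref{edgemeasure} doing the heavy lifting. Since \(G_{\alpha{}}\) is Roman we may assume \(f\inv1 = \emptyset\), so that \(\V = f\inv0 \cupdot f\inv2\) and, from the value being \(\frac23\), we read off \(\m{}f\inv2 = \frac13\) and \(\m{}f\inv0 = \frac23\).

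First I would exploit \cref{socialVerticesAreNull}: outside a null set every label-\(2\) vertex is unsocial, so neither of its two neighbours carries label \(2\); as there are no label-\(1\) vertices, both neighbours then carry label \(0\), giving \(\deg_{f\inv0}v = 2\) for almost every \(v\in f\inv2\). The null set of social vertices contributes nothing to the integral, so
\[
  \int_{f\inv2}\deg_{f\inv0}\D\m = 2\,\m{}f\inv2 = \tfrac23.
\]

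Next I would evaluate the same edge mass from the side of the zeros. Every label-\(0\) vertex is dominated and hence has at least one label-\(2\) neighbour, so \(\deg_{f\inv2}v \geq 1\) on \(f\inv0\), with equality to \(2\) exactly on the needy set \(N\), which is measurable by \cref{needyVerticesAreMeasurable}. Splitting off \(N\) therefore gives
\[
  \int_{f\inv0}\deg_{f\inv2}\D\m \;\geq\; \m(f\inv0\setminus N) + 2\,\m N \;=\; \m{}f\inv0 + \m N \;=\; \tfrac23 + \m N.
\]
Applying \eqref{edgemeasure} with \(A = f\inv2\) and \(B = f\inv0\) equates the two integrals, which forces \(\frac23 \geq \frac23 + \m N\) and hence \(\m N = 0\).

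The only delicate point is the bookkeeping around the null set of social vertices: I must be sure that the social \(2\)s — where a neighbour could be a \(2\) rather than a \(0\) — genuinely drop out of the first integral, which they do precisely because \cref{socialVerticesAreNull} makes that set null. Everything else is the edge-measure identity together with the domination constraint, so I anticipate no further obstacle. As a sanity check, the argument is the ``coverage'' heuristic of \cref{socialVerticesAreNull} made exact: almost every \(2\) supplies two units of domination, for a total budget of \(\frac23\), while every \(0\) consumes at least one unit and every needy \(0\) consumes two, so a positive measure of needy vertices would overspend the budget.
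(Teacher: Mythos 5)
Your proof is correct, but it takes a genuinely different route from the paper's. The paper proves this lemma by a contradiction argument that is structurally \emph{parallel} to (not dependent on) \cref{socialVerticesAreNull}: it splits \(f\inv2\) into the set \(M\) of neighbours of needy vertices and the remainder \(Q\), then bounds the ``coverage capacity'' by \(2\m Q + \frac{3}{2}\m M\), where the \(\frac32\) comes from an informal pairing argument (the two \(2\)s flanking a needy vertex share that vertex as a common neighbour, so the pair covers at most three zeros). You instead make the lemma a \emph{consequence} of \cref{socialVerticesAreNull}: since almost every \(2\) is unsocial and there are no \(1\)s, almost every \(2\) has exactly two \(0\)-neighbours, pinning the edge mass between \(f\inv2\) and \(f\inv0\) at exactly \(\frac23\); reading the same edge mass from the \(0\)-side via \eqref{edgemeasure} gives \(\frac23 + \m N\), forcing \(\m N = 0\). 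What your approach buys is rigour and economy: the exact double-counting identity replaces the paper's loosest step (the measure-theoretic ``pairing'' bookkeeping behind the \(\frac32\) factor, which strictly speaking needs its own edge-counting justification), and no case split of \(f\inv2\) is required. What it costs is independence: your argument needs \cref{socialVerticesAreNull} as an input, while the paper's two null-set lemmas stand side by side without relying on one another — though since the final Mithraic theorem uses both anyway, nothing is lost. Note that both arguments equally rely on the paper's standing convention that a value-\(\frac23\) Roman dominating function may be assumed to have \(f\inv1 = \emptyset\), so your opening reduction is consistent with the paper's framework.
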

\begin{proof}
  This is essentially analogous to the proof of \cref{socialVerticesAreNull}.

  Let~\(f\) be a~\(\frac{2}{3}\) value Roman dominating function for~\(G_{\alpha{}}\) and suppose the set~\(N\) of all needy vertices (which is measurable by \cref{needyVerticesAreMeasurable}) had measure~\(\ep{}>0\).

  We now bound the volume of vertices of label 0.

  Let~\(M\) denote the set of vertices adjacent to a needy vertex, necessarily a subset of~\(f\inv2\).
  Due to~\(T_{\alpha{}}\) being a translation,~\(M\) is measurable.

  Set~\(Q \definedas f\inv2\setminus M\), also measurable.

  The set~\(Q\)  can cover at most two zeros per vertex, or a total (due to \cref{parsprototo}) of~\(2\cdot\m{}Q\).

  Each  vertex in~\(M\) can cover two zeroes as well, but any pair of two adjacent vertices in~\(M\) can cover at most three 0s in total.
  Since vertices in~\(M\) always come in pairs, this gives a total of~\(\frac{3}{2}\cdot\m{}M\).

  Because each vertex in~\(N\) must be adjacent to at least one vertex in~\(M\),  the total amount of zeroes covered is no larger than
  \begin{align*}
    2\cdot\m{}Q + \frac{3}{2}\cdot\m{}M &\leq 2\cdot(\frac{1}{3}-\ep{}) + \frac{3}{2}\cdot \ep{} \\
                                  &= \frac{2}{3} - 2\ep{} +  \frac{3}{2}\ep{}\\
                                  &= \frac{2}{3} -  \frac{\ep{}}{2}\\
                                  &< \frac{2}{3},
  \end{align*}
  a contradiction.  
\end{proof}

\begin{lemma}\label{socialComponentsAreNull}
  Let~\(f\) be a Roman dominating function for~\(G_{\alpha{}}\) of value~\(\frac{2}{3}\).

  Then  only a null set of connected components can contain any social vertices.
\end{lemma}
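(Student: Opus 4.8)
The plan is to recognise that a connected component of \(G_{\alpha}\) is nothing but an orbit of the translation \(T_{\alpha}\), and then to saturate the null set of social vertices under this orbit relation, using translation invariance and countability.

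First I would observe that two vertices \(v,w\) lie in the same connected component of \(G_{\alpha}\) precisely when \(w = v + k\alpha\) for some \(k\in\ZZ\), since adjacency means differing by \(\pm\alpha\) and the component of \(v\) is obtained by repeatedly stepping along edges. Equivalently, the component of \(v\) is the orbit \(\set{T_{k\alpha}(v) : k\in\ZZ}\). Writing \(S\) for the set of social vertices, which is measurable by \cref{socialVerticesAreMeasurable}, the set \(C\) of all vertices lying in a component that meets \(S\) is therefore exactly the saturation
\[
  C = \bigcup_{k\in\ZZ} T_{k\alpha}(S).
\]

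By \cref{socialVerticesAreNull} we have \(\m S = 0\). Each map \(T_{k\alpha}\) is a translation of \(\RR/\ZZ\), and the Lebesgue measure is translation invariant, so each \(T_{k\alpha}(S)\) is again measurable with \(\m(T_{k\alpha}(S)) = 0\). Since \(\ZZ\) is countable, \(C\) is a countable union of null sets and hence itself null, which is precisely the assertion that only a null set of components contains social vertices.

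The one point worth flagging is conceptual rather than computational: saturating a null set under an equivalence relation can \emph{a priori} inflate the measure all the way to \(1\), so nullity of \(C\) is not automatic from nullity of \(S\) alone. What rescues us is that the component relation of \(G_{\alpha}\) is generated by the single translation \(T_{\alpha}\), i.e.\ by a \(\ZZ\)-action; it is this countability of the acting group, combined with translation invariance, that keeps the saturation null. I expect no genuine obstacle beyond making this observation explicit and confirming the description of \(C\) as the orbit saturation of \(S\).
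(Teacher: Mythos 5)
Your proof is correct and follows essentially the same route as the paper: both arguments reduce the claim to a countable union of null sets, exploiting the countability of each connected component together with translation invariance of the Lebesgue measure, and both rest on \cref{socialVerticesAreNull}. If anything, your version is the more explicit one --- the paper enumerates each component as \(\bigcup_{n\in\NN}v^{c}_{n}\) and interchanges the unions, leaving implicit why each slice \(\bigcup_{c\in C}v^{c}_{n}\) is null, which is exactly the point your identification of the saturation as \(\bigcup_{k\in\ZZ}T_{k\alpha}(S)\) (translates of the null set \(S\), using completeness for subsets) makes rigorous.
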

\begin{proof}
  Let~\(f\) be a Roman dominating function for~\(G_{\alpha{}}\) of value~\(\frac{2}{3}\).

  Let us fix any collection~\(C\) of connected components such that each of them contains a social vertex.

  By \cref{socialVerticesAreNull}, the set of those vertices must have measure zero.

  Each connected component is countable, hence each element of~\(c\in C\) can be written as~\(\bigcup_{n\in\NN}v^{c}_{n}\), where~\(v^{c}_{n}\) are the vertices of~\(c\).

  Thus we have
  \begin{align*}
    \bigcup_{c\in C}c = \bigcup_{c\in C}\bigcup_{n\in\NN}v^{c}_{n}
    = \bigcup_{n\in\NN}\bigcup_{c\in C}v^{c}_{n}
  \end{align*}
  and thus
  \begin{align*}
    \m{}\bigcup_{c\in C}c &= \m{}\bigcup_{n\in\NN}\bigcup_{c\in C}v^{c}_{n} \\
                       &= \sum_{n\in\NN}\m{}\bigcup_{c\in C}v^{c}_{n} \\
                       &=0,
  \end{align*}
  as promised.
\end{proof}

\begin{lemma}\label{needyComponentsAreNull}
  Let~\(f\) be a Roman dominating function for~\(G_{\alpha{}}\) of value~\(\frac{2}{3}\).

  Then only a null set of connected components can contain any needy vertices.
\end{lemma}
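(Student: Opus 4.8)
The plan is to follow exactly the same template established in \cref{socialComponentsAreNull}, since the logical structure is identical: we have already shown in \cref{needyVerticesAreNull} that the set of needy vertices is a null set, and we now wish to upgrade this from ``the needy vertices are null'' to ``the components containing needy vertices are null''. The key observation driving the whole argument is that each connected component of $G_{\alpha{}}$ is a countable set (indeed, each component is a bi-infinite path, since $G_{\alpha{}}$ is $2$-regular and acyclic), so that a collection of components containing a single null set's worth of ``marked'' vertices cannot have positive measure.

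First I would fix an arbitrary collection $C$ of connected components, each of which contains at least one needy vertex, and invoke \cref{needyVerticesAreNull} to conclude that the set of all needy vertices has measure zero. Next I would enumerate the vertices of each component $c\in C$ as $\bigcup_{n\in\NN}v^{c}_{n}$, relying on countability of each component, and then reindex the double union so that the outer union runs over $n\in\NN$ and the inner union over $c\in C$. The crucial step is that for each fixed $n$, the set $\bigcup_{c\in C}v^{c}_{n}$ consists of a selection of one vertex per component, and I would need to argue that this is contained in (a translate of) the needy set, hence null; summing countably many null sets over $n$ then gives that the whole union $\bigcup_{c\in C}c$ is null.

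The main obstacle, and the point where this proof differs subtly from \cref{socialComponentsAreNull}, is the indexing: in the social case, the chosen vertex per component could be taken to be the social vertex itself, so each slice $\bigcup_{c\in C}v^{c}_{n}$ was directly a subset of the social set. For needy vertices I would have to be slightly more careful, because the natural enumeration of a bi-infinite path fixes a ``zeroth'' vertex and the needy vertex need not sit at a uniform index across components. The clean fix is to enumerate each component so that some distinguished needy vertex is always $v^{c}_{0}$, and then observe that each shifted slice $\bigcup_{c\in C}v^{c}_{n}$ is a translate by $n\alpha{}$ (or a combination of $\pm\alpha{}$ steps) of a subset of the needy set; since translation preserves the Lebesgue measure and the needy set is null, each slice is null. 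Summing over the countably many indices $n$ then yields the claim, exactly as in \cref{socialComponentsAreNull}.
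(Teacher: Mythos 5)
Your proposal is correct and takes essentially the same route as the paper, whose entire proof of this lemma is the remark that it is ``entirely analogous'' to \cref{socialComponentsAreNull}: fix the collection of offending components, enumerate each one starting at a distinguished needy vertex, and kill the union slice by slice. Your extra care with translation is in fact the right reading of the analogy rather than a deviation from it --- contrary to your aside, even in the social case the slices \(\bigcup_{c\in C}v^{c}_{n}\) for \(n\neq 0\) are not subsets of the social set but translates by multiples of \(\alpha\) of the slice \(\bigcup_{c\in C}v^{c}_{0}\), so the translation-invariance (plus completeness) argument you spell out is exactly what makes both proofs rigorous.
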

\begin{proof}
  This is entirely analogous to the proof of \cref{socialComponentsAreNull}.
\end{proof}

We combine our lemmas to show the promised result.

\begin{theorem}
  Every irrational cycle graph is Mithraic.
\end{theorem}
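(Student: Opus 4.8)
The plan is to derive a contradiction from the assumption that a Roman dominating function \(\f\) of measured value exactly \(\frac{2}{3}\) exists. By the standing assumption coming from \(G_{\alpha{}}\) being Roman, I may take \(\f\) to use only the labels \(0\) and \(2\), so that \(\m\f\inv2 = \frac{1}{3}\) and \(\m\f\inv0 = \frac{2}{3}\). The first step is to invoke \cref{socialComponentsAreNull} and \cref{needyComponentsAreNull}: together they guarantee that, after discarding a null set of connected components, every remaining component carries no social and no needy vertices. Since all the measure we must account for lives on this co-null family, it suffices to understand the purely combinatorial structure forced on a single such component.

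Each connected component of \(G_{\alpha{}}\) is a bi-infinite path, namely an orbit \(\set{\v + n\alpha : n\in\ZZ}\) of the rotation \(T_{\alpha{}}\). On such a path I would read off the constraints: being a Roman dominating function forbids three consecutive \(0\)s (the middle one would be undominated); the absence of social vertices forbids two consecutive \(2\)s; and the absence of needy vertices forbids a lone \(0\) flanked by two \(2\)s. A short case analysis on the gaps between consecutive \(2\)s then shows that each gap must equal exactly three, i.e.\ that the labelling on the path is the periodic pattern \(\dots,2,0,0,2,0,0,\dots\) up to a shift. One also checks that a component cannot contain only finitely many \(2\)s, as that again produces an undominated \(0\). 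Consequently, on each good component the set of \(2\)s is an arithmetic progression with common difference \(3\alpha\).

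The decisive observation is that this local rigidity globalises. Writing \(A \definedas \f\inv2\), the per-component description shows that adding \(3\alpha\) preserves the residue class of every \(2\)-position within its orbit, so that \(T_{3\alpha{}}A = A\) up to the discarded null set; that is, \(A\) is a measurable set invariant under the rotation \(T_{3\alpha{}}\). Because \(\alpha\) is irrational, so is \(3\alpha\) modulo \(1\), and hence \(T_{3\alpha{}}\) is an ergodic transformation of \(\RR/\ZZ\) — a fact that can be extracted from \cref{alphaGoesEverywhere} together with the Lebesgue density theorem. Ergodicity forces every invariant measurable set to have measure \(0\) or \(1\), contradicting \(\m A = \frac{1}{3}\).

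I expect the main obstacle to be the passage from the componentwise statement to genuine \(T_{3\alpha{}}\)-invariance of the measurable set \(A\): one must take care that the null set of excluded components does not interfere, and that the ``phase'' \(r\in\set{0,1,2}\) of the period-\(3\) pattern, although it may vary wildly from orbit to orbit, never affects the conclusion \(T_{3\alpha{}}A = A\). Once invariance is established, the ergodicity of the irrational rotation does the rest; the combinatorial forcing of the period-\(3\) pattern, while essential, is routine.
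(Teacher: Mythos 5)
Your proposal is correct and follows essentially the same route as the paper's own proof: discard the null set of components containing social or needy vertices (via \cref{socialComponentsAreNull} and \cref{needyComponentsAreNull}), force the periodic \(2,0,0\) pattern on the remaining components, deduce that \(f\inv2\) is invariant under \(T_{3\alpha}\) up to a null set, and conclude by ergodicity of the irrational rotation that its measure must be \(0\) or \(1\), contradicting the value \(\frac{1}{3}\). The only cosmetic difference is how the ergodicity of \(T_{3\alpha}\) is sourced — you sketch a derivation from orbit density and the Lebesgue density theorem, while the paper cites Viana--Oliveira.
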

\begin{proof}
  Suppose we had a Roman dominating function~\(f\) for~\(G_{\alpha{}}\) of value~\(\frac{2}{3}\) which was not socially awkward.

  Then by  \cref{socialComponentsAreNull} and \cref{needyComponentsAreNull}, we know that all but a null set of the connected components are free of social and needy vertices.

  Since~\(f\) is a Roman dominating function for each connected component, this means that in every connected component outside some null set, labels occur in the order 2, 0, 0, 2, 0, 0, and so forth \emph{ad infinitum}.

  In particular, rotating the set of vertices of label 2 in those components by~\(3\alpha{}\) yields again the same set, or put differently, the set of those vertices is fixed under the map that rotates a point by~\(3\alpha{}\).

  As shown in \cite{ergodic}, proposition 4.2.1, that map is ergodic. (Actually, Viana and Oliveira show that the rotation by~\(2\alpha{}\) is ergodic, but since they show it for any irrational~\(\alpha{}\) and~\(\frac{3}{2}\) is rational, it can immediately be adapted for our case.)

  Since the rotation is ergodic and fixes all but a null set of the vertices labelled 2, the set of all vertices labelled 2 must have measure~\(0\) or~\(1\), neither of which is equal to~\(\frac{2}{3}\), leading to a contradiction.
\end{proof}

\section{Conclusion}

We have only begun to explore optimisation problems on probability graphs.
The preceding section solves the Roman domination problem (and indeed, regular domination as well) on irrational cycle graphs.
A generalisation to the class of all \(2\)\nobreakdash-regular graphings is possible and will appear in a follow-up paper, but it is currently unclear whether this approach can be generalised to higher degrees.

Furthermore, other domination problems could be extended to the measurable case in the same way.
As in the finite case, the question arises on which classes of probability graphs the Roman domination problem is equivalent to regular domination (that is, which probability graphs are Roman), and of course whether this classification only depends on finite subgraphs or on some manner of emergent property.

\vspace{.5cm}

\textbf{Acknowledgements.}
The author would like to thank Prof.~Dr.~Sven~O.\ Krumke for his guidance and patience. They further thank Markus Kurtz for their input on the current proof of \cref{schemeTheorem}, which replaces an earlier, much clunkier version.

%\bibliographystyle{alpha}
%\bibliography{sn-bibliography}

\begin{thebibliography}{CDJHH04}

\bibitem[BE51]{bruijn}
NG~de Bruijn and P~Erdos.
\newblock A colour problem for infinite graphs and a problem in the theory of
  relations.
\newblock {\em Indigationes Mathematicae}, 13:371--373, 1951.

\bibitem[CDJHH04]{cockayne}
Ernie~J Cockayne, Paul~A Dreyer~Jr, Sandra~M Hedetniemi, and Stephen~T
  Hedetniemi.
\newblock Roman domination in graphs.
\newblock {\em Discrete mathematics}, 278(1-3):11--22, 2004.

\bibitem[CM16]{conley2016}
Clinton~T Conley and Benjamin~D Miller.
\newblock A bound on measurable chromatic numbers of locally finite {Borel}
  graphs.
\newblock {\em Mathematical Research Letters}, 23(6):1633--1644, 2016.

\bibitem[KN06]{niederreiter}
L.~Kuipers and H.~Niederreiter.
\newblock {\em Uniform Distribution of Sequences}.
\newblock Dover Books on Mathematics. Dover Publications, 2006.

\bibitem[Lov12]{hombook}
L{\'a}szl{\'o} Lov{\'a}sz.
\newblock {\em Large networks and graph limits}, volume~60.
\newblock American Mathematical Soc., 2012.

\bibitem[Pik21]{borelsurvey}
Oleg Pikhurko.
\newblock Borel combinatorics of locally finite graphs, 2021.

\bibitem[VO16]{ergodic}
M.~Viana and K.~Oliveira.
\newblock {\em Foundations of Ergodic Theory}, volume 151 of {\em Cambridge
  Studies in Advanced Mathematics}.
\newblock Cambridge University Press, 2016.

\end{thebibliography}

\end{document}